\documentclass{amsart}
\usepackage{amssymb,amscd,amsthm,indentfirst,amsfonts,amsmath,geometry,setspace,hyperref,graphicx}
\usepackage[initials]{amsrefs}

\newtheorem{theorem}{Theorem}[section]
\newtheorem{proposition}[theorem]{Proposition}

\newtheorem{remark}[theorem]{Remark}

\newtheorem{lemma}[theorem]{Lemma}

\numberwithin{equation}{section}

\begin{document}

\title{Maximal lineability of the set of continuous surjections}
\author{Nacib Gurgel Albuquerque}
\address{Departamento de Matem\'{a}tica, \newline \indent
Universidade Federal da Para\'{i}ba, \newline \indent
58.051-900 - Jo\~{a}o Pessoa, Brazil.}
\email{ngalbqrq@gmail.com}

\thanks{The author is supported by Capes}

\subjclass[2010]{15A03}

\keywords{lineability; spaceability; algebrability; Peano type function}

\begin{abstract}
Let $m,n$ be positive integers. In this short note we prove that the set of all continuous and surjective functions from $\mathbb{R}^{m}$ to $\mathbb{R}^{n}$ contains (excluding the $0$ function) a $\mathfrak{c}$-dimensional vector space. This result is optimal in terms of dimension.
\end{abstract}

\maketitle

\section{Preliminaries}

Lately the study of the linear structure of certain subsets of surjective functions in $\mathbb{R}^\mathbb{R}$ (such as everywhere surjective functions, perfectly everywhere surjective functions, or Jones functions) has attracted the attention of several authors working on Real Analysis and Set Theory (see, e.g. \cites{ags2005,aronseoane2007,gamezmunozsanchezseoane2010,gamezmunozseoane2010,bps2013}). The previously mentioned functions are, indeed, very ``pathological'': for instance an everywhere surjective function $f$ in $\mathbb{R}^\mathbb{R}$ verifies that $f(I) = \mathbb{R}$ for every interval $I \subset \mathbb{R}$ and the other classes (perfectly everywhere surjective functions and Jones functions) are particular cases of everywhere surjective functions and, thus, with even ``worse'' behavior. It has been shown \cite{gamez2011} that there exists a $2^{\mathfrak{c}}$-dimensional vector space every non-zero element of which is a Jones function and, thus, everywhere surjective (here, $\mathfrak{c}$ stands for the cardinality of $\mathbb{R}$). Of course, this previous result is optimal in terms of dimension since dim($\mathbb{R}^\mathbb{R}$)$=$ $2^{\mathfrak{c}}$. However, all the previous classes are nowhere continuous, thus, it is natural to ask about the set of continuous surjections. The aim of this short note is to prove, in a more general framework that of $\mathbb{R}^\mathbb{R}$, that (for every $m,n \in \mathbb{N}$) the set of continuous surjections from $\mathbb{R}^{m}$ onto $\mathbb{R}^{n}$ is $\mathfrak{c}$-lineable \cite{ags2005} (that is, it contains a $\mathfrak{c}$-dimensional vector space every non-zero element of which is a continuous surjective function from $\mathbb{R}^{m}$ onto $\mathbb{R}^{n}$). Since $\dim \mathcal{C}\left(\mathbb{R}^{m},\mathbb{R}^{n}\right)  = \mathfrak{c}$ we have that this result would be the best possible in terms of dimension, that is, the set of continuous surjections from $\mathbb{R}^{m}$ onto $\mathbb{R}^{n}$ is maximal lineable \cite{bernal2010}.

While there are many trivial examples of surjective continuous functions in $\mathbb{R}^\mathbb{R}$, coming up with a concrete example of a continuous surjective function from $\mathbb{R}$ onto $\mathbb{R}^2$ is a totally different story. The existence of a continuous surjection from $\mathbb{R}$ onto $\mathbb{R}^{2}$ (a \emph{Peano type} function) can be found in \cite[p. 42]{kharazishvili} or \cite[p. 274]{munkres}. Both references use the existence of a continuous surjection from $\left[0,1\right]$ onto $\left[  0,1\right]^{2}$ (a \emph{Peano curve} in $\left[  0,1\right]^{2}$ or a \emph{space filling curve}). The existence of this curve is proved, for instance, in \cite{kharazishvili} invoking a result due to A. D. Alexandrov: there is a continuous surjection from the Cantor space $\mathcal{K}$ onto any arbitrary nonempty compact metric space (see \cite[p. 40]{kharazishvili}); in \cite[section 44]{munkres} the construction of the Peano curve is done geometrically, and is a consequence of the completeness of the space $\mathcal{C}(X,M)$ of all continuous functions from a topological space $X$ to a complete metric space $M$, considering $\mathcal{C}(X,M)$ with the uniform metric.

\section{The lineability of the set of continuous surjections from $\mathbb{R}^{m}$ to $\mathbb{R}^{n}$}

Let $m$ and $n$ be positive integers. Throughout this note we shall denote
$$
\mathcal{S}_{m,n} = \left\{  f\,:\,\mathbb{R}^{m}\longrightarrow\mathbb{R}^{n} \;;\;f\mbox{ is continuous and surjective}\right\}.
$$

The following result shows that $\mathcal{S}_{m,n} \neq \varnothing$, and uses the fact that $\mathcal{S}_{1,2} \neq \varnothing$ (\cite[p. 42]{kharazishvili}).

\begin{proposition}
Let $m, n \in \mathbb{N}$. There exists a continuous surjection $f: \mathbb{R}^{m} \rightarrow \mathbb{R}^{n}$.
\end{proposition}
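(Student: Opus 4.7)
The plan is to bootstrap from the hypothesis $\mathcal{S}_{1,2} \neq \varnothing$ in two stages: first I would raise the target dimension from $2$ to an arbitrary $n \geq 1$, producing a continuous surjection $g_n : \mathbb{R} \to \mathbb{R}^n$; then I would raise the source dimension from $1$ to arbitrary $m$ by the trivial device of precomposing with a coordinate projection.

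For the first stage I would argue by induction on $n$. The cases $n=1$ (the identity) and $n=2$ (the given Peano-type function, call it $h = (h_1, h_2) \in \mathcal{S}_{1,2}$) are immediate. For the inductive step, assuming a continuous surjection $g_n : \mathbb{R} \to \mathbb{R}^n$ has been constructed, I would set
$$
g_{n+1}(t) = \bigl(h_1(t),\, g_n(h_2(t))\bigr) \in \mathbb{R}^{n+1}.
$$
Continuity is clear from continuity of $h$ and $g_n$. For surjectivity, given $(y_0, y_1, \ldots, y_n) \in \mathbb{R}^{n+1}$, use surjectivity of $g_n$ to pick $s \in \mathbb{R}$ with $g_n(s) = (y_1, \ldots, y_n)$, and then use surjectivity of $h$ to pick $t \in \mathbb{R}$ with $h(t) = (y_0, s)$; this $t$ is a preimage of $(y_0, y_1, \ldots, y_n)$ under $g_{n+1}$.

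For the second stage, given any $m \geq 1$ and the $g_n$ just produced, I would simply define
$$
f(x_1, \ldots, x_m) = g_n(x_1).
$$
This is continuous as a composition of continuous maps (the projection onto the first coordinate followed by $g_n$), and surjective because $g_n$ already is. The only place where anything nontrivial happens is the inductive step, and the key point there is the full two-dimensional surjectivity of $h$: we need to prescribe the first coordinate and simultaneously a preimage point under $g_n$, which is exactly what a Peano-type function from $\mathbb{R}$ to $\mathbb{R}^2$ allows. So the only genuine content is the hypothesis $\mathcal{S}_{1,2} \neq \varnothing$; everything else is a routine induction.
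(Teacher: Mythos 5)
Your proposal is correct and follows essentially the same route as the paper: the inductive step $g_{n+1}(t)=(h_1(t),\,g_n(h_2(t)))$ is exactly the paper's composition $(id_{\mathbb{R}}\times g_n)\circ h$, and the final precomposition with the first-coordinate projection is identical. Your version just makes the surjectivity check in the induction more explicit than the paper does.
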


\begin{proof}
Let us take $f \in \mathcal{S}_{1,2}$. If $f_i := \pi_i \circ f, \; i=1,2$ denotes the $i$-coordinates functions of $f$ ($f=(f_1,f_2)$), then the map $id_{\mathbb{R}}\times f\,:\,\mathbb{R}^{2}\longrightarrow\mathbb{R}^{3}$
defined by $id_{\mathbb{R}}\times f(t,s) := (t,f_1(s),f_2(s))$ is a continuous surjection. Thus, $(id_{\mathbb{R}}\times f)\circ f$
is in $\mathcal{S}_{1,3}$. Proceeding in an induction manner, we can assure the existence of a function $g$ belonging to $\mathcal{S}_{1,n}$ for every $n\in \mathbb{N}$. Hence, defining $F\,:\, \mathbb{R}^{m} \longrightarrow \mathbb{R}^{n}$ by $ F := g \circ \pi_1$, i.e.,
$$
F(x)=F(x_1,\ldots,x_m)=g(x_1) \;,\; \mbox{ for all } x=(x_1,\dots,x_m) \in \mathbb{R}^m
$$
($\pi_1 : \mathbb{R}^m \longrightarrow \mathbb{R}$ denotes the canonical projection over the first coordinate), we conclude that $F \in \mathcal{S}_{m,n}$ ($F$ is composition of continuous surjective functions).
\end{proof}

Attempting maximal lineability of $\mathcal{S}_{m,n}$ (that is, $\mathfrak{c}$-lineability) we make use of the following remark (inspired in a result from \cite{ags2005}), which indicates a method to obtain our main result.

\begin{remark} \label{remark2.2}
Given a continuous surjection $f\,:\,\mathbb{R}^{m}\longrightarrow\mathbb{R}^{n}$, suppose we have $\mathcal{X}\subset\mathcal{C}\left(  \mathbb{R}^{n};\mathbb{R}^{n}\right)$ a subset of $\mathfrak{c}$-many linearly independent functions such that every nonzero element of $\mbox{span}(\mathcal{X})$ is a continuous surjection. Then, we have that
$$
\mathcal{Y} := \{F\circ f\}_{F\in\mathcal{X}}\subset\mathcal{C}\left( \mathbb{R}^{m};\mathbb{R}^{n}\right)
$$
has cardinality $\mathfrak{c}$, is linearly independent and is formed just by continuous surjections. Moreover,
$$
\mbox{span}(\mathcal{Y})\subset\mathcal{S}_{m,n}\cup\{0\},
$$
obtaining the $\mathfrak{c}$-lineability of $\mathcal{S}_{m,n}$.
\end{remark}

In order to continue we shall need two lemmas and some notation. First, let us consider (for $r>0$) the homeomorphism $\phi_{r} : \mathbb{R}\rightarrow \mathbb{R}$ given by
$$
\phi_{r}(t) := e^{rt} - e^{-rt}.
$$

\begin{lemma}
The subset $\mathfrak{A} := \{\phi_r\}_{r \in \mathbb{R}^+}$ of $\mathbb{R}^\mathbb{R}$ is linearly independent, has cardinality $\mathfrak{c}$, and every nonzero element of $\mbox{span}(\mathfrak{A})$ is continuous and surjective.
\end{lemma}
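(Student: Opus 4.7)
My plan is to verify the three assertions separately, with the surjectivity being the only part requiring a real argument.

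The cardinality claim is immediate: the map $r\mapsto \phi_r$ is injective from $\mathbb{R}^+$ onto $\mathfrak{A}$ (distinct $r$'s give functions with distinct growth rates, so they disagree for large $t$), hence $|\mathfrak{A}|=\mathfrak{c}$. For linear independence, I would suppose we have distinct $r_1,\dots,r_n \in \mathbb{R}^+$ and scalars $a_1,\dots,a_n$ with $\sum_{i=1}^n a_i\phi_{r_i}\equiv 0$, rewrite this as $\sum_i a_i e^{r_i t} + \sum_i (-a_i) e^{-r_i t} \equiv 0$, and invoke the classical linear independence of the real exponentials $\{e^{s_j t}\}$ for pairwise distinct exponents $s_j$ (here $s_j\in\{\pm r_1,\dots,\pm r_n\}$, all distinct since $r_i>0$ and the $r_i$ are distinct). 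This forces each $a_i=0$.

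For the main point, I take an arbitrary nonzero $f\in\mathrm{span}(\mathfrak{A})$, discard terms with zero coefficient, and write
\[
f(t)=\sum_{i=1}^n a_i\phi_{r_i}(t)=2\sum_{i=1}^n a_i\sinh(r_i t),
\]
with all $a_i\neq 0$ and $r_1>r_2>\cdots>r_n>0$. Continuity of $f$ is clear. The key observation is that $f$ is odd, so $f(0)=0$, and that the asymptotic behaviour of $f$ is governed by the single dominant term $a_1 e^{r_1 t}$: dividing through by $e^{r_1 t}$ and letting $t\to+\infty$, every other summand tends to $0$, so
\[
\lim_{t\to+\infty} e^{-r_1 t}f(t)=a_1\neq 0.
\]
Consequently $f(t)\to+\infty$ or $f(t)\to-\infty$ depending on the sign of $a_1$, and by oddness $f$ tends to the opposite infinity as $t\to-\infty$. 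Continuity plus the intermediate value theorem then gives $f(\mathbb{R})=\mathbb{R}$.

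There is no serious obstacle; the only point to be careful with is the justification of the asymptotic domination, which reduces to the elementary fact that $e^{(r_i-r_1)t}\to 0$ and $e^{-(r_i+r_1)t}\to 0$ as $t\to+\infty$ whenever $r_i<r_1$ and $r_i>0$. The choice of $\phi_r(t)=e^{rt}-e^{-rt}$ is tailored precisely so that the exponents $\pm r_i$ are all distinct across any finite subfamily, which is exactly what makes both the linear independence and the asymptotic analysis work in one stroke.
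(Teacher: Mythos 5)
Your proof is correct, and its core --- the surjectivity of a nonzero combination $\sum_{i}a_i\phi_{r_i}$ via the dominant term $a_1e^{r_1t}$ followed by the intermediate value theorem --- is essentially the paper's argument. The one genuine divergence is in the linear independence: the paper derives it as a byproduct of the very same asymptotic computation (a nonzero combination tends to $+\infty$ at one end of $\mathbb{R}$ and $-\infty$ at the other, hence cannot be the zero function), whereas you invoke the classical linear independence of the exponentials $e^{st}$ over the pairwise distinct exponents $s\in\{\pm r_1,\dots,\pm r_n\}$. Both are valid; the paper's route is self-contained and reuses work already done, while yours outsources that step to a standard fact and needs no extra computation. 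A minor stylistic difference is that you get the limit as $t\to-\infty$ from the oddness of $\phi_r$, while the paper reads both limits off the same decomposition; either is fine.
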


\begin{proof}
First let us prove that every nonzero element $\phi = \sum_{i=1}^k \alpha_i \cdot \phi_{r_i} \in \mbox{span}(\mathfrak{A})$ is surjective. We may suppose that $r_1 > r_2 > \cdots > r_k \, \mbox{ and } \, \alpha_1 \neq 0$. Writing
$$
\phi(t) = e^{r_1 t} \cdot \left(\alpha_1 + \sum_{i=2}^k \alpha_i \cdot e^{(r_i - r_1)t}\right) - \sum_{i=1}^k \alpha_i \cdot e^{-r_i t},
$$
we conclude that $\displaystyle \lim_{t\rightarrow+\infty} \phi(t) = \mbox{sign}(\alpha_1) \cdot \infty$ and $\displaystyle \lim_{t\rightarrow-\infty} \phi(t) = - \mbox{sign}(\alpha_1) \cdot \infty$. Thus, the continuity of $\phi$ assures its surjection. Now let us see that $\mathfrak{A}$ is linearly independent: suppose that $\psi = \sum_{i=1}^n \lambda_i \cdot \phi_{s_i} =0$. If there is some $\lambda_j \neq 0$, we may suppose that $s_1 > \cdots > s_n$ and $\lambda_1 \neq 0$. Repeating the argument above, we obtain
$$
\lim_{t\rightarrow+\infty} \psi(t) = \mbox{sign}(\lambda_1) \cdot \infty \, \mbox{ and } \, \lim_{t\rightarrow-\infty} \psi(t) = - \mbox{sign}(\lambda_1) \cdot \infty,
$$
which contradicts $\psi = 0$. This proves that $\mathfrak{A}$ is linearly independent. The other assertions are easy to prove.
\end{proof}

For each $r = (r_1,\ldots,r_n) \in (\mathbb{R}^+)^n$, let $\varphi_r$ be the homeomorphism from $\mathbb{R}^n$ to $\mathbb{R}^n$ defined by $\varphi_r = (\phi_{r_1},\ldots,\phi_{r_n})$, \emph{i.e.},
$$
\varphi_r(x) := (\phi_{r_1}(x_1),\ldots,\phi_{r_n}(x_n)) , \; \mbox{ for all } x=(x_1,\ldots,x_n) \in \mathbb{R}^n.
$$

Working on each coordinate, and using the previous lemma, we have the following.

\begin{lemma}\label{lemma2.4}
The set $\mathfrak{B} = \{\varphi_r\}_{r \in (\mathbb{R}^+)^n}$ of $\mathcal{C}(\mathbb{R}^n;\mathbb{R}^n)$ is linearly independent, has cardinality $\mathfrak{c}$, and every nonzero element of $\mbox{span}(\mathfrak{B})$ is continuous and surjective.
\end{lemma}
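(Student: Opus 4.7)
The plan is to lift the previous lemma (about $\mathfrak{A}\subset\mathcal{C}(\mathbb{R};\mathbb{R})$) to $\mathcal{C}(\mathbb{R}^n;\mathbb{R}^n)$ by exploiting the diagonal product structure of each $\varphi_r$: every coordinate function of $\varphi_r$ lies in $\mathfrak{A}$, so facts about $\operatorname{span}(\mathfrak{A})$ should transfer to $\operatorname{span}(\mathfrak{B})$ componentwise. Concretely, I will argue all three assertions (cardinality, linear independence of the full family $\mathfrak{B}$, surjectivity of every nonzero element of its span) in parallel, each coordinate reducing to an application of the previous lemma.

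\textbf{Cardinality.} The map $r\mapsto\varphi_r$ is a bijection from $(\mathbb{R}^+)^n$ onto $\mathfrak{B}$, since distinct $r\neq r'$ force some $\phi_{r_i}\neq\phi_{r_i'}$ and hence $\varphi_r\neq\varphi_{r'}$; consequently $|\mathfrak{B}|=|(\mathbb{R}^+)^n|=\mathfrak{c}^n=\mathfrak{c}$.

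\textbf{Linear independence and surjectivity.} Take a finite linear combination $\Phi=\sum_{j=1}^{k}\alpha_j\varphi_{r^{(j)}}$ with the $r^{(j)}\in(\mathbb{R}^+)^n$ pairwise distinct. Its $i$-th coordinate is
\[
\Phi_i(x)=\sum_{j=1}^{k}\alpha_j\,\phi_{r^{(j)}_i}(x_i),
\]
a function of $x_i$ alone belonging to $\operatorname{span}(\mathfrak{A})$. By the previous lemma, $\Phi_i$ is either identically zero or a continuous surjection of $\mathbb{R}$ onto $\mathbb{R}$, and the case $\Phi_i\equiv 0$ translates (after grouping terms with coinciding exponents) into the linear system $\sum_{j:\,r^{(j)}_i=s}\alpha_j=0$ for each $s$ occurring among the $i$-th coordinates. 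If $\Phi\equiv 0$, these systems must hold simultaneously for every $i$; combining them with the pairwise distinctness of the $r^{(j)}$ forces $\alpha_j=0$ for all $j$, giving linear independence. The same combinatorial step shows that if $\Phi\not\equiv 0$ then no $\Phi_i$ can vanish identically: each $\Phi_i$ is a continuous surjection onto $\mathbb{R}$, and since $\Phi(x_1,\dots,x_n)=(\Phi_1(x_1),\dots,\Phi_n(x_n))$ separates its variables, any target $(y_1,\dots,y_n)\in\mathbb{R}^n$ is hit by choosing $x_i$ independently in the nonempty set $\Phi_i^{-1}(y_i)$.

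The main obstacle I foresee is the combinatorial bookkeeping in the linear-independence step, namely passing from the coordinatewise constraints $\sum_{j:\,r^{(j)}_i=s}\alpha_j=0$ (one per pair $(i,s)$) to the global conclusion $\alpha_j=0$ for every $j$. Once this is handled carefully using the pairwise distinctness of the $r^{(j)}$, the same analysis automatically delivers surjectivity, since it prevents any coordinate of a nonzero $\Phi$ from collapsing identically to zero.
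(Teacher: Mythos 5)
Your reduction to the previous lemma founders at exactly the step you flag as ``combinatorial bookkeeping'': the coordinatewise constraints $\sum_{j:\,r^{(j)}_i=s}\alpha_j=0$ together with pairwise distinctness of the vectors $r^{(j)}$ do \emph{not} force $\alpha_j=0$, and no amount of bookkeeping can fix this, because the claim is false. Take $n=2$, the four pairwise distinct indices $(1,1),(1,2),(2,1),(2,2)$, and coefficients $1,-1,-1,1$. Then
\[
\varphi_{(1,1)}-\varphi_{(1,2)}-\varphi_{(2,1)}+\varphi_{(2,2)}\equiv 0,
\]
since the first coordinate is $\phi_1(x_1)-\phi_1(x_1)-\phi_2(x_1)+\phi_2(x_1)\equiv 0$ and the second is $\phi_1(x_2)-\phi_2(x_2)-\phi_1(x_2)+\phi_2(x_2)\equiv 0$; all of your constraints hold, yet no coefficient vanishes. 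So $\mathfrak{B}$ is linearly \emph{dependent} for every $n\geq 2$. Your surjectivity claim fails for the same reason: $\varphi_{(1,1)}-\varphi_{(1,2)}=\left(0,\phi_1(x_2)-\phi_2(x_2)\right)$ is a nonzero element of $\mbox{span}(\mathfrak{B})$ whose range lies in $\{0\}\times\mathbb{R}$, contradicting your assertion that no coordinate of a nonzero $\Phi$ can vanish identically. In other words, the gap is not merely in your execution: Lemma \ref{lemma2.4}, read literally with the full index set $(\mathbb{R}^+)^n$, is false, and the paper's own one-line justification (``working on each coordinate, and using the previous lemma'') glosses over the same error.

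What is true --- and all that the main theorem actually needs --- is the lemma for the diagonal subfamily $\{\varphi_{(r,r,\ldots,r)}\}_{r\in\mathbb{R}^+}$, which still has cardinality $\mathfrak{c}$. For $\Phi=\sum_{j=1}^{k}\alpha_j\varphi_{(r_j,\ldots,r_j)}$ with $r_1>\cdots>r_k$, every coordinate is the \emph{same} combination $\Phi_i(x)=\sum_{j=1}^{k}\alpha_j\phi_{r_j}(x_i)$, involving distinct exponents with the same coefficients, so the previous lemma applies verbatim: if some $\alpha_j\neq 0$, each $\Phi_i$ is a continuous surjection of $\mathbb{R}$ onto $\mathbb{R}$, hence $\Phi$ is surjective by your (correct) separation-of-variables observation, and in particular $\Phi\neq 0$, which gives linear independence. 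Your cardinality argument is fine as stated. If you insist on the full family indexed by $(\mathbb{R}^+)^n$, you must abandon both the linear independence of all of $\mathfrak{B}$ and the surjectivity of its whole span; restricting to the diagonal is the clean repair.
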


Now it is time to state and prove our main result.

\begin{theorem}
${\mathcal S}_{m,n}$ is $\mathfrak{c}$-lineable.
\end{theorem}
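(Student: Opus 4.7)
The plan is that the theorem now follows by combining the three ingredients already assembled in the preceding text: the Proposition providing the existence of at least one continuous surjection $f : \mathbb{R}^m \to \mathbb{R}^n$, Lemma \ref{lemma2.4} providing a $\mathfrak{c}$-dimensional ``reservoir'' of continuous surjections from $\mathbb{R}^n$ to itself, and Remark \ref{remark2.2} explaining how to transfer lineability via precomposition with $f$.

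Concretely, I would first fix $f \in \mathcal{S}_{m,n}$, which is possible by the Proposition. Then, taking $\mathcal{X} := \mathfrak{B} = \{\varphi_r\}_{r \in (\mathbb{R}^+)^n}$ as in Lemma \ref{lemma2.4}, I form
$$
\mathcal{Y} := \{\varphi_r \circ f\}_{r \in (\mathbb{R}^+)^n} \subset \mathcal{C}\left(\mathbb{R}^{m};\mathbb{R}^{n}\right).
$$
Each element of $\mathcal{Y}$ is continuous and surjective as a composition of two continuous surjections, and by the discussion in Remark \ref{remark2.2} the card\-in\-al\-i\-ty of $\mathcal{Y}$ is $\mathfrak{c}$.

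The one point that still deserves an explicit check is that $\mathcal{Y}$ remains linearly independent after composition and, more strongly, that $\mbox{span}(\mathcal{Y}) \setminus \{0\} \subset \mathcal{S}_{m,n}$. This is where surjectivity of $f$ is essential: any finite linear relation $\sum_{i=1}^k \alpha_i (\varphi_{r_i} \circ f) = 0$ on $\mathbb{R}^m$ implies, evaluating at preimages and using $f(\mathbb{R}^m) = \mathbb{R}^n$, that $\sum_{i=1}^k \alpha_i \varphi_{r_i} \equiv 0$ on $\mathbb{R}^n$; by the linear independence part of Lemma \ref{lemma2.4} all $\alpha_i$ vanish. The same argument shows that a nonzero element of $\mbox{span}(\mathcal{Y})$ has the form $G \circ f$ for some nonzero $G \in \mbox{span}(\mathfrak{B})$, and $G \circ f$ is continuous and surjective because $G$ is (Lemma \ref{lemma2.4}) and $f$ is.

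I do not foresee a real obstacle here: all the genuine work was already done in Lemma \ref{lemma2.4} (the exponential-difference construction $\phi_r(t) = e^{rt} - e^{-rt}$ yields the required $\mathfrak{c}$-dimensional family of surjections in each coordinate). The remaining content of the theorem is purely formal, amounting to verifying that precomposition by a fixed surjection preserves both linear independence and surjectivity, which is exactly the role of Remark \ref{remark2.2}. Hence the bound is sharp since $\dim \mathcal{C}(\mathbb{R}^m,\mathbb{R}^n) = \mathfrak{c}$, and $\mathcal{S}_{m,n}$ is maximal lineable as claimed.
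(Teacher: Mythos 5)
Your proposal is correct and follows essentially the same route as the paper: fix $f\in\mathcal{S}_{m,n}$, compose the family $\mathfrak{B}$ of Lemma \ref{lemma2.4} with $f$, and use the surjectivity of $f$ to transfer both linear independence and surjectivity of nonzero linear combinations, exactly as in Remark \ref{remark2.2}. No gaps.
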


\begin{proof}
Let $f \in \mathcal{S}_{m,n}$. Using the notation of the previous lemma and the ideas of the Remark \ref{remark2.2}, we now prove that the set $\mathfrak{C} = \{ F \circ f \}_{F \in \mathfrak{B}}$ is so that  $\mbox{span}(\mathfrak{C})$ is the space we are looking for.

The surjectivity of $f$ assures that $G \circ f = 0$ implies $G=0$, for every function $G$ from $\mathbb{R}^n$ to $\mathbb{R}^n$. Thus, if $G_i \in \mathfrak{B} , \; i=1,\ldots,k$ and
$$
0 = \sum_{i=1}^k \alpha_i \cdot G_i \circ f = \left(\sum_{i=1}^k \alpha_i G_i \right) \circ f,
$$
then $\sum_{i=1}^k \alpha_i \cdot G_i =0$; so since $\mathfrak{B}$ is linearly independent, we conclude that $\alpha_i=0 , \; i=1,\ldots,k$ and thus, $\mathfrak{C}$ is linearly independent. Thus, clearly, it has cardinality $\mathfrak{c}$. Furthermore, any nonzero function
$$
\sum_{i=1}^l \lambda_i \cdot F_i \circ f = \left(\sum_{i=1}^l \lambda_i F_i \right) \circ f
$$
of $\mbox{span}(\mathfrak{C})$ is continuous and surjective, since it is the composition of continuous surjective functions (recall that, from Lemma \ref{lemma2.4}, $\sum_{i=1}^l \lambda_i F_i$ is a continuous surjective function). Therefore, $\mbox{span}(\mathfrak{C})$ only contains, except the zero function, continuous surjective functions.
\end{proof}

\begin{remark}
As we mentioned in the Introduction, and since $\dim \mathcal{C}\left(\mathbb{R}^{m},\mathbb{R}^{n}\right)  = \mathfrak{c}$, this result is the best possible in terms of dimension. The next step (in sense of trying a similar result in higher dimensions) could be related to the lineability of $\mathcal{S}_{m,\mathbb{N}}$ (the set of the continuous surjections from $\mathbb{R}$ onto $\mathbb{R}^\mathbb{N}$ with the product topology). However this is not possible, since $\mathcal{S}_{m,\mathbb{N}} = \emptyset$ (\cite[p. 275]{munkres}).

\end{remark}

\begin{bibdiv}
\begin{biblist}

\bib{ags2005}{article}{
   author={Aron, R.},
   author={Gurariy, V. I.},
   author={Seoane-Sep\'{u}lveda, J. B.},
   title={Lineability and spaceability of sets of functions on $\mathbb{R}$},
   journal={Proc. Amer. Math. Soc.},
   volume={133},
   date={2005},
   number={3},
   pages={795--803 (electronic)},
}

\bib{aronseoane2007}{article}{
  author={Aron, R. M.},
  author={Seoane-Sep\'{u}lveda, J. B.},
  title={Algebrability of the set of everywhere surjective functions on $\mathbb C$},
  journal={Bull. Belg. Math. Soc. Simon Stevin},
  volume={14},
  date={2007},
  number={1},
  pages={25--31},
}

\bib{bernal2010}{article}{
  author={Bernal-Gonz{\'a}lez, L.},
  title={Algebraic genericity of strict-order integrability},
  journal={Studia Math.},
  volume={199},
  date={2010},
  number={3},
  pages={279--293},
}

\bib{bps2013}{article}{
  author={Bernal-Gonz{\'a}lez, L.},
  author={Pellegrino, D. M.},
  author={Seoane-Sep\'{u}lveda, J. B.},
  title={Linear subsets of nonlinear sets in topological vector spaces},
  journal={Bulletin of American Mathematical Society, in press},
}

\bib{gamez2011}{article}{
  author={G\'{a}mez-Merino, J. L.},
  title={Large algebraic structures inside the set of surjective functions},
  journal={Bull. Belg. Math. Soc. Simon Stevin},
  volume={18},
  date={2011},
  number={2},
  pages={297--300},
}

\bib{gamezmunozsanchezseoane2010}{article}{
  author={G\'{a}mez-Merino, J. L.},
  author={Mu\~{n}oz-Fern\'{a}ndez, G. A.},
  author={S\'{a}nchez, V. M.},
  author={Seoane-Sep\'{u}lveda, J.B.},
  title={Sierpi\'nski-Zygmund functions and other problems on lineability},
  journal={Proc. Amer. Math. Soc.},
  volume={138},
  date={2010},
  number={11},
  pages={3863--3876},
}

\bib{gamezmunozseoane2010}{article}{
  author={G\'{a}mez-Merino, J. L.}, 
  author={Mu\~{n}oz-Fern\'{a}ndez, G. A.}, 
  author={Seoane-Sep\'{u}lveda, J. B.},  
  title={Lineability and additivity in $\mathbb{R}^\mathbb{R}$},
  journal={J. Math. Anal. Appl.},
  volume={369},
  date={2010},
  number={1},
  pages={265--272},
}

\bib{kharazishvili}{book}{
   author={Kharazishvili, A. B.},
   title={Strange functions in real analysis},
   series={Pure and Applied Mathematics (Boca Raton)},
   volume={272},
   edition={2},
   publisher={Chapman \& Hall/CRC, Boca Raton, FL},
   date={2006},
   pages={xii+415},
}

\bib{munkres}{book}{
   author={Munkres, J. R.},
   title={Topology},
   edition={2},
   publisher={Prentice Hall, Upper Saddle River, NJ},
   date={2000},
   pages={xvi+537},
}

\end{biblist}
\end{bibdiv}

\end{document}